\newtheorem{Theorem}{Theorem}[section]
\newtheorem{Proposition}[Theorem]{Proposition} 
\newtheorem{Lemma}[Theorem]{Lemma}
\newtheorem{Corollary}[Theorem]{Corollary}
\theoremstyle{definition}
\newtheorem{Question}{Question}
\newtheorem{Comment}[Theorem]{Comment}
\newtheorem{Definition}[Theorem]{Definition}
\newcommand{\arxiv}[1]{\href{http://arxiv.org/abs/#1}{\tt arXiv:\nolinkurl{#1}}}
\newcommand{\low}{\text{low}}
\newcommand{\wt}{\operatorname{wt}}
\newcommand{\bz}{\Bbb{Z}}
\newcommand{\bc}{\Bbb{C}}
\newcommand{\g}{\mathfrak{g}}
\newcommand{\Aa}{\mathcal{A}}
\newcommand{\LL}{\mathcal{L}}
\newcommand{\Flip}{\mbox{Flip}}
\newcommand{\Id}{\mbox{Id}}
\newcommand{\C}{\mathcal{C}}
\newcommand{\barr}{\text{bar}}
\newcommand{\modqi}{{(mod \, q^{-1}\LL)}}
\theoremstyle{definition}
\begin{document}

\title[A formula for the $R$-matrix]{A formula for the $R$-matrix using a system of weight preserving endomorphisms}

\author{Peter Tingley}
\email{ptingley@math.mit.edu}
\address{Peter Tingley,
MIT dept. of math,
77 Massachusetts Ave,
Cambridge, MA, 02139}
\thanks{2000 Mathematics Subject Classification. Primary: 17B37. Secondary: 16W30.}
\thanks{This work was supported by the RTG grant DMS-0354321.}

\begin{abstract}
We give a formula for the universal $R$-matrix of the quantized universal enveloping algebra $U_q(\g).$ This is similar to a previous formula due to Kirillov-Reshetikhin and  Levendorskii-Soibelman, except that where they use the action of the braid group element $T_{w_0}$ on each representation $V$, we show that one can instead use a system of weight preserving endomorphisms. One advantage of our construction is that it is well defined for all symmetrizable Kac-Moody algebras. However we have only established that the result in equal to the universal $R$-matrix in finite type.
\end{abstract}

\maketitle 

\section{Introduction}

Let $\g$ be a finite type complex simple Lie algebra and $U_q(\g)$ the corresponding quantized universal enveloping algebra. In \cite{KR:1990} and \cite{LS}, Kirillov-Reshetikhin and Levendorskii-Soibelman developed a formula for the universal R-matrix 
\begin{equation} \label{KReq} R= (X^{-1} \otimes X^{-1}) \Delta(X), \end{equation}
where $X$ belongs to a completion of $U_q(\g)$. The element $X$ is constructed using the braid group element $T_{w_0}$ corresponding to the longest word of the braid group, and as such only makes sense when $\g$ is of finite type. 

The element $X$ in \eqref{KReq} defines a vector space endomorphism $X_V$ on each representation $V$ of $U_q(\g)$, and in fact $X$ is defined by the system of endomorphisms $\{ X_V \}$. Furthermore, any natural system of vector space endomorphisms $\{ E_V \}$ can be  represented as an element $E$ in a certain completion of $U_q(\g)$ (see \cite{Rcommutor}). The action of the coproduct $\Delta(E)$ on a tensor product $V \otimes W$ is then simply $E_{V \otimes W}$. Thus the right side of \eqref{KReq} is well defined if $X$ is replaced by $E= \{ E_V \}$.

In this note we consider the case where $\g$ is a symmetrizable Kac-Moody algebra. We define a system of weight preserving endomorphisms  $\Theta= \{ \Theta_V \}$ of all integrable highest weight representations $V$ of $U_q(\g)$. When $\g$ is of finite type, we show that
\begin{equation} \label{theta_int}
R= (\Theta^{-1} \otimes \Theta^{-1}) \Delta(\Theta),
\end{equation}
where the equality means that, for any type {\bf 1} finite dimensional modules $V$ and $W$, the actions of the two sides of \eqref{theta_int} on $V \otimes W$ agree.
We expect this remains true in other cases, although this has not been proven. 

Our endomorphisms $\Theta_V$ are not linear over the field $\bc(q)$, but are instead compatible with the automorphism which inverts $q$. For this reason, $\Theta$ cannot be realized using an element in a completion of $U_q(\g)$, and it is crucial to work with systems of endomorphisms. There is a further technically in that $\Theta_V$ actually depends on a choice of global basis for $V$. Nonetheless, we give a precise meaning to \eqref{theta_int}.

This note is organized as follows. In Section \ref{notation} we fix notation and conventions. In Section \ref{Rstuff} we review the universal $R$-matrix.  In Section \ref{autint} we review a method developed by Henriques and Kamnitzer \cite{cactus} to construct isomorphisms $V \otimes W \rightarrow W \otimes V$. In Section \ref{crystal} we state some background results on crystal bases and global bases. In Section \ref{maketheta} we construct our endomorphism $\Theta$. In Section \ref{theproof} we prove our main theorem (Theorem \ref{main}), which establishes \eqref{theta_int} when $\g$ is of finite type. In Section \ref{questions} we briefly discuss future directions for this work. 

\subsection{Acknowledgements}
We thank Joel Kamnitzer, Noah Snyder, and  Nicolai Reshetikhin for many helpful discussions.

\section{Conventions} \label{notation}
We must first fix some notation. For the most part we follow \cite{CP}.

$\bullet$ $\g$ is a symmetrizable Kac-Moody algebra with Cartan matrix $A = (a_{ij})_{i,j \in I}$ and Cartan subalgebra $\mathfrak{h}$.

$\bullet$ $ \langle \cdot , \cdot \rangle $ denotes the paring between $ \mathfrak{h} $ and $ \mathfrak{h}^\star $ and $ ( \cdot , \cdot) $ denotes the usual symmetric bilinear form on either $ \mathfrak{h}$ or $ \mathfrak{h}^\star $.  Fix the usual elements $ \alpha_i \in \mathfrak{h}^\star $ and $ H_i \in \mathfrak{h}$, and recall that $ \langle H_i, \alpha_j \rangle = a_{ij} $.  

$\bullet$ $ d_i = (\alpha_i, \alpha_i)/2 $, so that $ (H_i, H_j) = d_j^{-1} a_{ij} $ and, for all $\lambda \in \mathfrak{h}^*$, $(\alpha_i, \lambda)= d_i \langle H_i, \lambda \rangle.$  

$\bullet$ $ B $ is the symmetric matrix $ (d_j^{-1} a_{ij}) $. 

$\bullet$ $\rho \in \mathfrak{h}^*$ satisfies $\langle H_i, \rho \rangle  = 1$ for all $i$. Note that this implies $(\alpha_i, \rho) = d_i$. If $A$ is not invertible this condition does not uniquely determine $\rho$, and we simply choose any one solution. 

$\bullet$ $ H_\rho $ is the element of $ \mathfrak{h} $ such that, for any $\lambda \in \mathfrak{h}^*$, $\langle H_\rho, \lambda \rangle = (\rho, \lambda)$. In particular, $\langle H_\rho, \alpha_i \rangle=d_i$ for all $ i $.

$\bullet$ $U_q(\g)$ is the quantized universal enveloping algebra associated to $\g$, generated over $\mathbb{C}(q)$ by $E_i$, $F_i$ for all  $i \in I$, and $K_H$ for $H$ in the coweight lattice of $\g$. As usual, let $K_i= K_{d_i H_i}.$ For convenience, we recall the exact formula for the coproduct:
\begin{equation} \label{coproduct}
\begin{cases}
\Delta{E_i} & = E_i \otimes K_i + 1 \otimes E_i \\
\Delta{F_i} &= F_i \otimes 1 + K_i^{-1} \otimes F_i \\
\Delta{K_H} &= K_H \otimes K_H
\end{cases}
\end{equation}
and the following commutation relations
\begin{equation} \label{KpastEF}
K_H E_i K_H^{-1} = q^{\langle H, \alpha_i \rangle}E_i \quad \text{ and } \quad K_H F_i K_H^{-1} = q^{-\langle H, \alpha_i \rangle} F_i.
\end{equation}
At times it will be necessary to adjoin a fixed $k$-th root of $q$ to the base field $\bc(q),$ where $k$ is twice the dual Coxeter number of $\g$. 

$\bullet$ $[n]= \frac{q^n - q^{-n}}{q-q^{-1}},$ and $X^{(n)} = \frac{X^n}{[n][n-1] \cdots [2]}.$

$\bullet$ Fix a representation $V$ of $U_q(\g)$ and $\lambda \in \mathfrak{h}^*$. We say $v \in V$ is a weight vector of weight $\lambda$ if, for all $H \in \mathfrak{h}$, $K_H(v)= q^{\langle H, \lambda \rangle} v$. 

$\bullet$ $\lambda \in \mathfrak{h}^*$ is called a dominant integral weight if $\langle H_i, \lambda \rangle \in \bz_{\geq0}$ for all $i$.

$\bullet$ For each dominant integral weight $\lambda$, $V_\lambda$ is the type {\bf 1} irreducible integrable representation of $U_q(\g)$ with highest weight $\lambda$. 

$\bullet$ $B_\lambda$ is a fixed global basis for $V_\lambda$, in the sense of Kashiwara (see \cite{K}). $b_\lambda$ and $b_\lambda^{\low}$ are the highest weight and lowest weight elements of $B_\lambda$ respectively.

\section{The R-matrix} \label{Rstuff}

We briefly recall the definition of a universal $R$-matrix, and the related notion of a braiding.

\begin{Definition} 
A braided monoidal category is a monoidal category $\C$, along with a natural system of isomorphisms $\sigma^{br}_{V,W}: V \otimes W \rightarrow W \otimes V$ for each pair $V,W \in \C$, such that, for any $U,V, W \in \C$, the following two equalities hold:
\begin{equation} \label{qt} \begin{aligned} (\sigma^{br}_{U,W} \otimes \Id) \circ (\Id \otimes \sigma^{br}_{V,W}) = \sigma^{br}_{U \otimes V, W} \\
(\Id \otimes \sigma^{br}_{U,W}) \circ (\sigma^{br}_{U,V} \otimes  \Id) = \sigma^{br}_{U,V\otimes W}. 
\end{aligned}
\end{equation}
 The system $\sigma^{br} : = \{ \sigma^{br}_{V,W} \}$ is called a braiding on $\C$.
\end{Definition}

Let $\widetilde{U_q(\g) \otimes U_q(\g)}$ be the completion of $U_q(\g) \otimes U_q(\g)$ in the weak topology defined by all matrix elements of representations $V_\lambda \otimes V_\mu$, for all ordered pairs of dominant integral weights $(\lambda, \mu)$.

\begin{Definition} \label{rR}
A universal $R$-matrix is an element $R$ of $\widetilde{U_q(\g) \otimes U_q(\g)}$ such that $\sigma^{br}_{V,W} := \mbox{Flip} \circ R$ is a braiding on the category of $ U_q(\g) $ representations. 
\end{Definition}

Note in particular that, since the braiding is an isomorphism, $R$ must be invertible. It is central to the theory of quantized universal enveloping algebras that, for any symmetrizable Kac-Moody algebra $\g$, $U_q(\g)$ has a universal $R$-matrix. The universal $R$-matrix is not truly unique, but there is a well-studied standard choice. See \cite{CP} for a thorough discussion when $\g$ is of finite type, and \cite{Lusztig:1993} for the general case. 

When $\g$ is of finite type, the $R$-matrix can be described explicitly as follows. Note that the expression below is presented in the $h$-adic completion of $U_h(\g)$, whereas here we are working in $U_q(\g)$. However, it is straightforward to check that this gives a well defined endomorphism of $V \otimes W$ for any integrable highest weight $U_q(\g)$-representations $V$ and $W$, with the only difficulty being that certain fractional powers of $q$ can appear. 

\begin{Theorem} (see \cite[Theorem 8.3.9]{CP}) \label{Rnormal} Assume $\g$ is of finite type. Then the standard universal $R$ matrix for $U_q(\g)$ is given by the expression 
\begin{equation} \label{R12}
R_h = \exp \left( h \sum_{i,j} (B^{-1} )_{ij} H_i \otimes H_j \right) \prod_\beta \exp_{q_\beta} \left[ ( 1 - q_\beta^{-2} )E_\beta \otimes F_\beta \right],
\end{equation}
where the product is over all the positive roots of $\g$, and the order of the terms is such that $\beta_r$ appears to the left of $\beta_s$ if $r > s$. \qed
\end{Theorem}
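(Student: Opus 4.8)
The plan is to write $R_h = \kappa\,\Theta$, where $\kappa = \exp(h\sum_{i,j}(B^{-1})_{ij}H_i\otimes H_j)$ is the Cartan factor and $\Theta = \prod_\beta \exp_{q_\beta}[(1-q_\beta^{-2})E_\beta\otimes F_\beta]$ is the quasi-$R$-matrix appearing in \eqref{R12}, and then to verify that this product is a universal $R$-matrix, i.e.\ that it satisfies $\Delta^{\mathrm{op}}(u) = R_h\,\Delta(u)\,R_h^{-1}$ for all $u$ together with the two coproduct identities $(\Delta\otimes\Id)(R_h) = (R_h)_{13}(R_h)_{23}$ and $(\Id\otimes\Delta)(R_h) = (R_h)_{13}(R_h)_{12}$.

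The first preparatory step is to construct the root vectors. Fixing a reduced word $w_0 = s_{i_1}\cdots s_{i_N}$ for the longest element and applying Lusztig's braid symmetries, I would set $E_{\beta_r} = T_{i_1}\cdots T_{i_{r-1}}(E_{i_r})$ and define $F_{\beta_r}$ analogously. This produces the convex ordering $\beta_1 < \cdots < \beta_N$ of the positive roots that fixes the order of the factors in \eqref{R12} (with $q_\beta = q^{(\beta,\beta)/2}$), together with PBW bases for $U_q^{\pm}(\g)$. The Cartan factor is handled separately: on a pair of weight vectors of weights $\lambda$ and $\mu$, $\kappa$ acts by the scalar $q^{(\lambda,\mu)}$—here the identity $(H_i,H_j)=d_j^{-1}a_{ij}$ shows that $\sum_{i,j}(B^{-1})_{ij}H_i\otimes H_j$ represents the symmetric bilinear form on weights—and conjugation by $\kappa$ accounts for the $K_i$ factors in the coproduct \eqref{coproduct}. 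This reduces the quasitriangularity axioms to corresponding statements about $\Theta$ alone.

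The technical heart is the intertwining identity $\Theta\,\Delta(u) = \bar\Delta(u)\,\Theta$ for every $u\in U_q(\g)$, where $\bar\Delta$ is the opposite coproduct conjugated by the bar involution; by multiplicativity it is enough to check $u\in\{E_i,F_i,K_H\}$. The efficient route is not to attack the closed product directly, but first to characterize $\Theta = \sum_\mu \Theta_\mu$ abstractly through the recursion satisfied by its homogeneous components $\Theta_\mu\in U_q^+(\g)_\mu\otimes U_q^-(\g)_{-\mu}$, which comes from the nondegenerate canonical pairing between $U_q^+$ and $U_q^-$; the intertwining identity then falls out of this recursion on generators. I would then separately prove that the ordered product of $q$-exponentials over the convex ordering reproduces the same homogeneous components $\Theta_\mu$. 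I expect this reconciliation to be the main obstacle: it requires the rank-two computations encoding the quantum Serre relations and the compatibility of the $T_i$ with the pairing, and it carries the burden of showing that the product in \eqref{R12} is independent of the chosen reduced word for $w_0$.

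With the intertwiner established, the final step is bookkeeping. Combining $\Theta\,\Delta(u)=\bar\Delta(u)\,\Theta$ with the Cartan twist yields $\Delta^{\mathrm{op}}(u)=R_h\,\Delta(u)\,R_h^{-1}$, and the two coproduct identities follow from the corresponding factorizations of $\kappa$ and of $\Theta$—the latter most transparent in the product-over-roots form. Finally, as the statement already notes, passing from the $h$-adic expression to a genuine operator on $V\otimes W$ for integrable highest weight $V,W$ is routine: on any fixed pair of weight spaces all but finitely many root factors act as the identity, and only fractional powers of $q$ (absorbed by adjoining the root of $q$ fixed in Section~\ref{notation}) require any care.
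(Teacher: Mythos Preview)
The paper does not prove this theorem at all: the statement ends with a \qed\ and a citation to \cite[Theorem 8.3.9]{CP}, and the text immediately following says explicitly that none of the notation in \eqref{R12} will even be explained, since the only fact used later is that each $E_\beta$ kills a highest weight vector. In other words, Theorem~\ref{Rnormal} is quoted as background, not established.

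Your proposal, by contrast, is a sketch of how one would actually \emph{prove} the multiplicative formula for the $R$-matrix from scratch, essentially following the Lusztig/Kirillov--Reshetikhin/Levendorskii--Soibelman route: build PBW root vectors from a reduced expression for $w_0$, characterize the quasi-$R$-matrix $\Theta$ by the intertwining relation with the bar-twisted coproduct, and then identify that $\Theta$ with the ordered product of $q$-exponentials. As an outline this is broadly correct and is indeed the standard argument (with the rank-two reduction and independence of reduced word being, as you say, the real work). But it is entirely orthogonal to what the paper does: the paper simply imports the result and uses only the trivial consequence that the non-Cartan factor acts as the identity on $b_\lambda\otimes c$. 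So there is no gap in your mathematics, but there is a mismatch of purpose---you are reproving a cited theorem that the paper deliberately blackboxes.
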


We will not explain all the notation in \eqref{R12}, since the only thing we use is the fact that $E_\beta$ acts as $0$ on any highest weight vector, and so the product in the expression acts as the identity on $b_\lambda \otimes c \in V_\lambda \otimes V_\mu$.

\section{Constructing isomorphisms using systems of endomorphisms} \label{autint}

Here and throughout this note a representation of $U_q(\g)$ will mean a direct sum of possibly infinitely many of the irreducible integrable type {\bf 1} representations $V_\lambda$. We note that the category of such representations is closed under tensor product. When $\g$ is of finite type, we can restrict to finite direct sums, or equivalently finite dimensional type {\bf 1} modules, since this category is already closed under tensor product. 

In this section we review a method for constructing natural systems of isomorphisms $\sigma_{V,W}: V \otimes W \rightarrow W \otimes V$. This idea was used by Henriques and Kamnitzer in \cite{cactus}, and was further developed in \cite{Rcommutor}. The data needed to construct such a system is:

\begin{enumerate}

\item An algebra automorphism $C_\xi$ of $U_q(\g)$ which is also a coalgebra anti-automorphism.

\item \label{diagg} A natural system of invertible vector space endomorphisms $\xi_V$ of each representation $V$ of $U_q(\g)$ which is compatible with $C_\xi$ in the sense that the following diagram commutes for all $V$:
\begin{equation*}
\xymatrix{
V  \ar@(dl,dr) \ar@/ /[rrr]^{\xi_V} &&& V \ar@(dl,dr) \\
U_q(\g)  \ar@/ /[rrr]^{\C_{\xi}} &&&   U_q(\g). \\
}
\end{equation*}
\end{enumerate}
It follows immediately from the definition of coalgebra anti-automorphism that 
\begin{equation} \label{seq1} \sigma^\xi_{V,W} := \Flip \circ (\xi_V^{-1} \otimes \xi_W^{-1}) \circ \xi_{V \otimes W}
\end{equation}
is an isomorphism of $U_q(\g)$ representations from $V \otimes W$ to $W \otimes V$, where $\Flip$ is the map from $V \otimes W$ to $W \otimes V$ defined by $\Flip (v \otimes w)= w \otimes v.$ 

We will normally denote the system $\{ \xi_V \}$ simply by $\xi$, and will denote the action of $\xi$ on the tensor product of two representations by $\Delta(\xi)$. This is justified since, as explained in \cite{Rcommutor}, $\xi$ in fact belongs to a completion of $U_q(\g)$, and the action of $\xi$ on $V \otimes W$ is calculated using the coproduct. With this notation $\sigma^\xi:= \{ \sigma^\xi_{V,W} \}$ can be expressed as
\begin{equation} \label{seq2} \sigma^\xi = \Flip \circ (\xi^{-1} \otimes \xi^{-1}) \circ \Delta(\xi).
\end{equation}

In the current work we require a little more freedom: we will sometimes use automorphisms $C_\xi$ of $U_q(\g)$ which are not linear over $\bc(q)$, but instead are bar-linear (i.e. invert $q$). This causes some technical difficulties, which we deal with in Section \ref{maketheta}. Once we make this precise, we will use all the same notation for a bar-linear $C_\xi$ and compatible system of $\bc$ vector space automorphisms $\xi$ as we do in the linear case, including using $\Delta(\xi)$ to denote $\xi$ acting on a tensor product.

\begin{Comment} Since the representations we are considering are all completely reducible, to describe the data $( \C_\xi, \xi )$ it is sufficient to describe $C_\xi$ and to give the action of $\xi_{V_\lambda}$ on any one vector $v$ in each irreducible representation $V_\lambda$. This is usually more convenient then describing $\xi_{V_\lambda}$ explicitly. Of course, the choice of $C_\xi$ imposes a restriction on $\xi_{V_\lambda} (v)$, so when we give such a description of $\xi$, we must check that the action on our chosen vector in each $V_\lambda$ is compatible with $C_\xi$.
\end{Comment}

\begin{Comment} \label{when-aut}
If $C_\xi$ is an coalgebra automorphism as opposed to a coalgebra anti-automorphism, the same arguments show that $(\xi_V^{-1} \otimes \xi_W^{-1}) \circ \xi_{V \otimes W}: V \otimes W \rightarrow V \otimes W$ is an isomorphism. 
\end{Comment}

\section{Crystal bases and Global bases} \label{crystal}

In order to extend the construction described in the Section \ref{autint} to include bar linear $\xi$, we will need to use some results concerning crystal bases and global bases. We state only what is relevant to us, and refer the reader to \cite{K} for a more complete exposition. Unfortunately, the conventions in \cite{K} and \cite{CP} do not quite agree. In particular, the theorems from \cite{K} that we will need are stated in terms of a different coproduct, so we have modified them to match our conventions.

\begin{Definition} \label{Kashiwara_operators} Fix an integrable highest weight representation $V$ of $U_q(\g)$.  Define the Kashiwara operators $ \tilde{F}_i, \tilde{E}_i : V \rightarrow V $ by linearly extending
\begin{equation}
\begin{cases}
\tilde{F}_i(F_i^{(n)}(v)) = F_i^{(n+1)} (v) \\
\tilde{E}_i (F_i^{(n)}(v))= F_i^{(n-1)} (v).
\end{cases}
\end{equation}
for all $ v \in V $ such that  $E_i (v)=0$. 
\end{Definition}

\begin{Definition}
Let $\Aa_\infty = \mathbb{C}[q^{-1}]_0$ be the algebra of rational functions in $q^{-1}$ over $\bc$ whose denominators are not divisible by $q^{-1}$.
\end{Definition}

\begin{Definition} \label{crystaldef}
A crystal basis of a representation $V$ (at $q=\infty$) is a pair $(\LL, \widetilde B) $, where $ \LL $ is an $\Aa_\infty$-lattice  of $ V$ and $\widetilde B$ is a basis for $ \LL/{q^{-1}}\LL$, such that
\begin{enumerate}
\item $\LL $ and $ \widetilde B $ are compatible with the weight decomposition of $V $.
\item $\LL $ is invariant under the Kashiwara operators and $ \widetilde B \cup 0 $ is invariant under their residues $ e_i := \tilde{E}_i^\modqi, f_i := \tilde{F}_i^\modqi : \LL/{q^{-1}}\LL \rightarrow \LL/{q^{-1}} \LL$.
\item For any $ b, b' \in \widetilde B $, we have $ e_i b = b' $ if and only if $ f_i b' = b $.
\end{enumerate}
\end{Definition}

\begin{Definition}
Let $(\LL, \widetilde B)$ be a crystal basis for $V$. The highest weight elements of $\widetilde B$ are those $b \in \widetilde B$ such that, for all $i$, $e_i(b) =0$.
\end{Definition}

\begin{Proposition} (see \cite{K}) 
Each $V_\lambda$ has a crystal basis $(\LL_\lambda, \widetilde B_\lambda)$. Furthermore, $(\LL_\lambda, \widetilde B_\lambda)$ has a unique highest weight element, and this occurs in the $\lambda$ weight space. \qed
\end{Proposition}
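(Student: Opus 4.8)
The plan is to follow Kashiwara's original construction and produce the natural candidate for the pair, then verify the three axioms of Definition \ref{crystaldef}. Let $v_\lambda$ be a highest weight vector. I would take $\LL_\lambda$ to be the $\Aa_\infty$-submodule of $V_\lambda$ spanned by all vectors $\F_{i_1} \cdots \F_{i_k} v_\lambda$, and let $\widetilde B_\lambda$ be the set of nonzero images of these vectors in $\LL_\lambda / q^{-1} \LL_\lambda$, with $b_\lambda := v_\lambda \bmod q^{-1}\LL_\lambda$. By construction $\LL_\lambda$ and $\widetilde B_\lambda$ are compatible with the weight decomposition and $\LL_\lambda$ is stable under each $\F_i$, so the real content is stability under the $\E_i$, the fact that $\widetilde B_\lambda$ is an honest basis of the quotient, and axiom (iii) relating $e_i$ and $f_i$.

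For each fixed $i$ the subalgebra generated by $E_i, F_i, K_i^{\pm 1}$ is a copy of $U_q(\mathfrak{sl}_2)$, and I would decompose $V_\lambda$ into $i$-strings, i.e.\ into irreducible $U_q(\mathfrak{sl}_2)$-modules. On a single string with top $u$ (so $E_i u = 0$) the assertions reduce to an explicit rank-one computation: the $\Aa_\infty$-span of the $F_i^{(n)} u$ is preserved by $\E_i$ and $\F_i$, and its residue basis satisfies axiom (iii). To pass from these local statements to a single lattice that works for \emph{all} $i$ simultaneously I would equip $V_\lambda$ with Kashiwara's \emph{polarization}, a symmetric bilinear form with respect to which $F_i$ is adjoint to $E_i$ up to an explicit power of $q$, normalized by $(v_\lambda, v_\lambda)=1$. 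One then characterizes the lattice intrinsically as $\LL_\lambda = \{\, v : (v,v) \in \Aa_\infty \,\}$ and shows $(b,b') \equiv \delta_{b,b'} \pmod{q^{-1}\Aa_\infty}$ for $b,b' \in \widetilde B_\lambda$; this near-orthonormality gives linear independence, hence that $\widetilde B_\lambda$ is a basis of $\LL_\lambda/q^{-1}\LL_\lambda$.

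The main obstacle, and the heart of the matter, is tying the single-$i$ statements together, since the $i$-string decomposition changes with $i$. I expect to carry this out by Kashiwara's \emph{grand loop} induction on the level $n$, the height of $\lambda - \mu$ over weights $\mu$ of $V_\lambda$. One assembles a package of mutually dependent claims --- invariance of $\LL_\lambda$ under every $\E_i$, compatibility of the polarization with the operators, the $e_i$--$f_i$ inversion axiom, and the correct interaction of $\F_i$ with $\F_j$ at the residue level --- and proves them together at level $n$ assuming all of them at levels $<n$. The delicate point is that the induction genuinely mixes the indices: to control $\E_i$ on a vector one passes to a $j$-string for some $j \neq i$, invokes the inductive hypothesis there, and feeds the conclusion back. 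This interlocking bookkeeping, rather than any isolated estimate, is where the work concentrates.

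Finally I would deduce uniqueness of the highest weight element. By construction every element of $\widetilde B_\lambda$ has the form $f_{i_1}\cdots f_{i_k} b_\lambda$, where $f_i$ is the residue of $\F_i$, so the crystal graph is connected. Suppose $b \in \widetilde B_\lambda$ satisfies $e_i b = 0$ for all $i$, and write $b = f_{i_1}\cdots f_{i_k} b_\lambda$ with $k$ minimal. If $k \geq 1$ then $b = f_{i_1} b'$ with $b' = f_{i_2}\cdots f_{i_k} b_\lambda \neq 0$, whence axiom (iii) gives $e_{i_1} b = b' \neq 0$, a contradiction; so $k = 0$ and $b = b_\lambda$. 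Since $V_\lambda$ is the irreducible highest weight module its $\lambda$ weight space is one-dimensional and spanned by $v_\lambda$, while the $\F_i$ strictly lower the weight; hence the unique highest weight element $b_\lambda$ lies in weight $\lambda$, as claimed.
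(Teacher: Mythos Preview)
The paper does not prove this proposition at all: it is stated with a citation to Kashiwara \cite{K} and closed immediately with \qed, treating the existence and uniqueness of the crystal basis as a black-box input. Your proposal, by contrast, is a faithful outline of Kashiwara's original argument --- the lattice generated by iterated Kashiwara operators applied to the highest weight vector, the reduction to $U_q(\mathfrak{sl}_2)$-strings, the polarization, and the grand-loop induction tying the different indices together --- followed by a correct connectedness argument for uniqueness of the highest weight element. So your approach is not merely different from the paper's; it supplies content the paper deliberately omits. As a sketch it is accurate, though of course the grand loop itself is where essentially all the difficulty hides, and a full execution would require the careful simultaneous induction on several interdependent statements that you allude to but do not carry out.
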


\begin{Theorem} \cite[Thoerem 1]{K} \label{crystal_tensor}
Let $ V, W $ be representations with crystal bases $(\LL, \widetilde A)$ and $(\mathcal{M}, \widetilde B) $ respectively.  Then $( \LL \otimes \mathcal{M}, \widetilde A \otimes \widetilde B) $ is  a crystal basis of $ V \otimes W $. Furthermore, the highest weight elements of $\widetilde A \otimes \widetilde B$ are all of the form $a^{high} \otimes b$, where $a^{high}$ is a highest weight element of $\widetilde A$. \qed
\end{Theorem}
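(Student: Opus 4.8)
The plan is to reduce the whole statement to a rank-one computation, for each $i$ separately, and then read off a combinatorial tensor-product rule modulo $q^{-1}\LL$. Since the Kashiwara operators $\E_i,\F_i$ are built only from the generators $E_i,F_i,K_i$, and since both $(\LL,\widetilde A)$ and $(\mathcal{M},\widetilde B)$ are compatible with the weight decomposition and invariant under these operators, the restriction of everything in sight to the copy of $U_q(\mathfrak{sl}_2)$ attached to the node $i$ is again a crystal basis. Hence it suffices to treat $\g=\mathfrak{sl}_2$, where every module decomposes as a finite direct sum of irreducible ``strings'' and the crystal basis of each string is the monomial basis $\{F_i^{(n)}u\}$ built from a highest weight vector $u$ using the divided powers of Definition \ref{Kashiwara_operators}.

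The heart of the argument is this rank-one computation. Writing a tensor basis vector as $a\otimes b$ with $a=F_i^{(m)}u$ and $b=F_i^{(n)}v$ (for highest weight vectors $u,v$ of the respective strings), I would apply the coproduct \eqref{coproduct} to $a\otimes b$, using the commutation relations \eqref{KpastEF} to move the $K_i^{\pm1}$ factors past the lowering operators, and then expand using the quantum-binomial identities for the divided powers. Keeping careful track of the powers of $q$, the goal is to establish that, modulo $q^{-1}(\LL\otimes\mathcal{M})$,
\[
\E_i(a\otimes b)\equiv
\begin{cases}
\E_i a\otimes b & \text{if } \varphi_i(a)\geq\varepsilon_i(b),\\
a\otimes\E_i b & \text{if } \varphi_i(a)<\varepsilon_i(b),
\end{cases}
\qquad
\F_i(a\otimes b)\equiv
\begin{cases}
\F_i a\otimes b & \text{if } \varphi_i(a)>\varepsilon_i(b),\\
a\otimes\F_i b & \text{if } \varphi_i(a)\leq\varepsilon_i(b),
\end{cases}
\]
where $\varepsilon_i(b)$ and $\varphi_i(b)$ denote the maximal number of times $e_i$, respectively $f_i$, can be applied to $b$. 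The key point is that all the ``mixing'' terms that would violate this rule carry a coefficient lying in $q^{-1}\Aa_\infty$, and hence vanish on the residue $\LL/q^{-1}\LL$. (For instance, in the simplest case one finds that the honest operator produces a correction with coefficient of the shape $q^{-1}/(1+q^{-2})$, which is visibly in $q^{-1}\Aa_\infty$.)

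With the combinatorial rule in hand, the three axioms of Definition \ref{crystaldef} for $(\LL\otimes\mathcal{M},\widetilde A\otimes\widetilde B)$ follow readily. Compatibility with the weight decomposition is immediate, since weights add under tensor product. Invariance of $\LL\otimes\mathcal{M}$ under $\E_i,\F_i$ and invariance of $(\widetilde A\otimes\widetilde B)\cup\{0\}$ under the residues $e_i,f_i$ are direct consequences of the rule, as is the reciprocity condition $e_i(a\otimes b)=a'\otimes b'\iff f_i(a'\otimes b')=a\otimes b$, which one verifies case by case against the boundary values of $\varphi_i(a)$ versus $\varepsilon_i(b)$. For the characterization of highest weight elements I would use the $\E_i$ rule: if $a\otimes b$ is killed by every $e_i$, then the case $\varphi_i(a)<\varepsilon_i(b)$ is impossible (it would force $e_i(a\otimes b)=a\otimes e_ib\neq 0$, since $\varepsilon_i(b)>\varphi_i(a)\geq 0$ gives $e_ib\neq 0$), so $\varphi_i(a)\geq\varepsilon_i(b)$ and $e_i(a\otimes b)=e_ia\otimes b$, whence $e_ia=0$. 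As this holds for every $i$, the element $a$ is a highest weight element $a^{\high}$ of $\widetilde A$, giving the claimed form $a^{\high}\otimes b$.

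I expect the main obstacle to be the rank-one estimate in the second step, namely showing that the correction terms genuinely lie in $q^{-1}\Aa_\infty$ so that the naive rule holds \emph{exactly} modulo $q^{-1}\LL$. This is delicate: it requires that $\LL$ is precisely the Kashiwara lattice generated from the top of each string by the $\F_i$, and the cancellations only close because of the specific normalization of the divided powers $F_i^{(n)}$ and the way the $q$-powers align through the binomial expansions. This is exactly the point where the real content of the theorem resides, and where I would have to argue most carefully.
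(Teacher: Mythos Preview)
The paper does not give its own proof of this theorem: it is stated with a terminal \qed and attributed to Kashiwara \cite[Theorem~1]{K}. So there is nothing in the paper to compare your proposal against. Your outline is essentially Kashiwara's original argument---reduce to the $U_q(\mathfrak{sl}_2)$ at each node, establish the combinatorial tensor-product rule by a direct coproduct computation with divided powers, then read off the crystal axioms and the shape of highest-weight elements---and as such it is the right approach and matches the cited source.

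One point worth flagging explicitly. The paper warns that its conventions differ from those of \cite{K}: here the crystal lattice is taken at $q=\infty$ over $\Aa_\infty=\bc[q^{-1}]_0$, and the coproduct is \eqref{coproduct}, whereas Kashiwara works at $q=0$ with a different coproduct. Your tensor-product rule as written is consistent with the conclusion of the theorem in the paper's conventions (highest-weight elements have the highest-weight factor on the \emph{left}), and a quick check in the simplest $\mathfrak{sl}_2$ cases confirms the inequalities you state are the correct ones here. But when you actually carry out the rank-one estimate you must verify that the correction terms land in $q^{-1}\Aa_\infty$ rather than in $q\Aa_0$; transcribing the computation from \cite{K} without adjusting for the convention swap would flip the cases and yield the opposite conclusion $a\otimes b^{\high}$. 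This is precisely the ``delicate'' step you identify at the end, and it is the only place where real care is needed.
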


\begin{Definition} \label{Slmn}
Let $(\LL_\lambda, \widetilde B_\lambda)$ and $(\LL_\mu, \widetilde B_\mu)$ be crystal bases for $V_\lambda$ and $V_\mu.$ Set
\begin{equation*} {S}_{\lambda, \mu}^\nu:= \{ b \in \widetilde B_\mu : b_\lambda \otimes b \text{ is a highest weight element of } \widetilde B_\lambda \otimes \widetilde B_\mu \text{ of weight } \nu \}. \end{equation*}
\end{Definition}

For any $V_\lambda$, and any choice of highest weight vector $b_\lambda \in V_\lambda$, there is a canonical choice of basis $B_\lambda$  for $V_\lambda$, which contains $b_\lambda$, and such that $(B_\lambda+ q \LL, \LL)$ is a crystal basis for $V$, where $\LL$ is the $\Aa_\infty$-span of $B_\lambda$. That is not to say there is a unique basis for $V_\lambda$ satisfying these two conditions, only that one can find a canonical ``good" choice. This is known as the global basis for $V_\lambda$. A complete construction can be found in \cite{K}, although here we more closely follow the presentation from \cite[Chapter 14.1C]{CP}. In the present work we simply use the fact that the global basis exists, and state the properties of $B_\lambda$ that we need. 

\begin{Definition} \label{bardef}
$C_\barr: U_q(\g) \rightarrow U_q(\g)$ is the $\bc$-algebra involution defined by
\begin{equation}
\begin{cases}
C_\barr(E_i) = E_i \\
C_\barr(F_i) = F_i \\
C_\barr(K_i) = K_i^{-1}  \\
C_\barr(q) = q^{-1}.
\end{cases}
\end{equation}
\end{Definition}

\begin{Theorem} (Kashiwara \cite{K}) \label{canonical_basis}
Fix a highest weight vector $b_\lambda \in V_\lambda$. There is a canonical choice of a ``global" basis $B_\lambda$ of $V_\lambda$. This has the properties (although is not defined by these alone) that:
\begin{enumerate}
\item $b_\lambda \in B_\lambda$.

\item $B_\lambda$ is a weight basis for $V_\lambda$.

\item Let $\LL$ be the $\Aa_\infty$ span of $B_\lambda$. Then $(B_\lambda + q^{-1} \LL, \LL)$ is a crystal basis for $V_\lambda$.

\item \label{gbar} Define the involution $\barr_{(V_\lambda, B_\lambda)}$ of $V_\lambda$ by $\barr_{(V_\lambda, B_\lambda)}(f(q) b) = f(q^{-1}) b$ for all $f(q) \in \bc(q)$ and $b \in B_\lambda$. Then  $\barr_{(V_\lambda, B_\lambda)}$ is compatible with $C_\barr$, in the sense discussed in Section \ref{autint}. 
\end{enumerate} 
Furthermore, if a different highest weight vector is chosen, $B_\lambda$ is multiplied by an overall scalar. \qed
\end{Theorem}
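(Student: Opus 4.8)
The plan is to realize $B_\lambda$ as the unique bar-invariant lift of the crystal basis $\widetilde B_\lambda$ to $V_\lambda$, after which properties (i)--(iv) drop out formally. The first ingredient is a bar involution of $V_\lambda$ itself. Since $V_\lambda=U_q(\g)\,b_\lambda$ and $b_\lambda$ is killed by every $E_i$, I would set $\overline{u\,b_\lambda}:=C_\barr(u)\,b_\lambda$. To check this is well defined I verify that $C_\barr$ preserves $\operatorname{Ann}(b_\lambda)$, the left ideal generated by the $E_i$, the elements $K_H-q^{\langle H,\lambda\rangle}$, and the $F_i^{\langle H_i,\lambda\rangle+1}$: indeed $C_\barr$ fixes $E_i,F_i$ and sends $K_H-q^{\langle H,\lambda\rangle}$ to $K_H^{-1}-q^{-\langle H,\lambda\rangle}$, which again annihilates $b_\lambda$. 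The resulting $\bc$-semilinear map $v\mapsto\bar v$ squares to the identity, fixes $b_\lambda$, is compatible with $C_\barr$ by construction, and (using \eqref{KpastEF}) preserves each weight space, since a vector of weight $\mu$ is sent to one of weight $\mu$.

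The second, essential ingredient is the crystal basis $(\LL_\lambda,\widetilde B_\lambda)$ recalled above, together with a bar-stable $\bc[q,q^{-1}]$-form $M$ of $V_\lambda$. The crux is to prove that $\LL_\lambda$, its bar-conjugate $\overline{\LL_\lambda}$, and $M$ form a \emph{balanced triple}, meaning that the natural map
\[
M\cap\LL_\lambda\cap\overline{\LL_\lambda}\ \xrightarrow{\ \sim\ }\ \LL_\lambda/q^{-1}\LL_\lambda
\]
is an isomorphism. Granting balancedness, each $b\in\widetilde B_\lambda$ has a unique bar-invariant lift $G(b)\in M$ reducing to $b$ modulo $q^{-1}\LL_\lambda$; this is the standard reformulation of the isomorphism above, uniqueness following since the difference of two such lifts would lie in $M\cap q^{-1}\LL_\lambda\cap\overline{q^{-1}\LL_\lambda}$, which reduces to $0$ and hence vanishes. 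I then set $B_\lambda:=\{\,G(b):b\in\widetilde B_\lambda\,\}$.

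The asserted properties now follow by bookkeeping. Property (i): the unique highest weight element of $\widetilde B_\lambda$ is the reduction of $b_\lambda$, and $b_\lambda$ is bar-invariant, so uniqueness forces its lift to be $b_\lambda$, giving $b_\lambda\in B_\lambda$. Property (ii): each $G(b)$ has weight $\wt(b)$, because $v\mapsto\bar v$ preserves weights and $G(b)\equiv b$. Property (iii): from $G(b)\equiv b$ modulo $q^{-1}\LL_\lambda$ together with Nakayama, the $\Aa_\infty$-span $\LL$ of $B_\lambda$ equals $\LL_\lambda$ and $B_\lambda+q^{-1}\LL=\widetilde B_\lambda$, so $(B_\lambda+q^{-1}\LL,\LL)$ is the crystal basis. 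Property (iv): $\barr_{(V_\lambda,B_\lambda)}$ fixes $B_\lambda$ and inverts $q$, precisely as $v\mapsto\bar v$ does, and both are $\bc$-semilinear, so the two coincide; hence $\barr_{(V_\lambda,B_\lambda)}$ is compatible with $C_\barr$, as required. Finally, changing $b_\lambda$ to $c\,b_\lambda$ multiplies $\LL_\lambda$, $\widetilde B_\lambda$, and the bar involution uniformly, so every $G(b)$ rescales by the same factor $c$.

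The genuine difficulty is concentrated entirely in the balancedness claim of the second paragraph. Establishing that the crystal lattice, its bar-conjugate, and the integral form fit together as a balanced triple is the hard theorem of the subject: in Kashiwara's approach it is proved by the simultaneous ``grand loop'' induction that builds the crystal basis, its compatibility with tensor products, and the balancedness all at once, while in the simply-laced case Lusztig obtains it geometrically from perverse sheaves on quiver varieties. Once balancedness is in hand, the construction of $B_\lambda$ and the verification of (i)--(iv) are the routine linear algebra and weight bookkeeping described above.
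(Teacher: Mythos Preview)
The paper does not give a proof of this statement; it is quoted as a known theorem of Kashiwara with a citation and a \qed. Your sketch is a correct outline of Kashiwara's standard construction---define the bar involution on $V_\lambda$ via $C_\barr$ and the chosen highest weight vector, take a bar-stable integral form $M$ (you should specify this as $U_q^{\bc[q,q^{-1}]}(\g)\,b_\lambda$, the module generated over the divided-power form), invoke balancedness of the triple $(\LL_\lambda,\overline{\LL_\lambda},M)$ to produce unique bar-invariant lifts $G(b)$, and read off (i)--(iv)---and you correctly identify that the entire depth lies in the balancedness assertion, which is the content of Kashiwara's grand-loop induction. Since the paper offers no argument of its own, there is nothing further to compare.
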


\begin{Definition}
If $V$ is any (possibly reducible) representation of $U_q(\g)$, we say a basis $B$ of $V$ is a global basis if there is a decomposition of $V$ into irreducible components such that $B$ is a union of global bases for the irreducible pieces.
\end{Definition}

\section{The system of endomorphisms $\Theta$} \label{maketheta}

We now introduce a $\bc$-algebra automorphism $C_\Theta$ of $U_q(\g)$. Notice that this inverts $q$, so it is not a $\bc(q)$ algebra automorphism, but is instead $\barr$ linear:
\begin{equation}
\begin{cases}
C_\Theta (E_i) =   E_{i} K_i^{-1} \\
C_\Theta (F_i) =   K_i F_{i} \\
C_\Theta (K_i) = K_{i}^{-1} \\
C_\Theta (q) = q^{-1}.
\end{cases}
\end{equation}
One can check that $C_\Theta$ is a well defined algebra involution and a coalgebra anti-involution. In order to use the methods of section \ref{autint}, we must define a $\bc$-vector space automorphism $\Theta_{V_\lambda}$ of each $V_\lambda$ which is compatible with $C_\Theta$. This is complicated by the fact that $C_\Theta$ does not preserve the $\bc(q)$ algebra structure, but instead inverts $q$. We must actually work in the category of representations with chosen global bases. An element of this category will be denoted $(V,B)$, where $B$ is the chosen global basis of $V$. 

\begin{Definition} \label{Thetadef} Fix a global basis $B_\lambda$ for $V_\lambda$. 
The action of $\Theta_{(V_\lambda, B_\lambda)}$ on $V_\lambda$ is defined by requiring that it be compatible with $C_\Theta$, and that $\Theta_{(V_\lambda, B_\lambda)}(b_\lambda)=  q^{-( \lambda, \lambda)/2 + ( \lambda, \rho )} b_\lambda$. This is extended by naturality to define $\Theta_{(V,B)}$ for any (possibly reducible) $V$. 
\end{Definition}

\begin{Comment}
To ensure that Definition \ref{Thetadef} makes sense, one must check that there is a map which sends $b_\lambda$ to $q^{-( \lambda, \lambda)/2 + ( \lambda, \rho )} b_\lambda$ and is compatible with $C_\Theta$. This amounts to checking that $b_\lambda$ is still a highest weight vector if the action of $U_q(\g)$ is twisted by the automorphism $C_\Theta$, and is not difficult.  
\end{Comment}

\begin{Comment}
In some cases $\Theta$ acts on a weight vector as multiplication by a fractional power of $q$. To be completely precise we should adjoin a fixed $k^{th}$ root of unity to the base field $\bc(q)$, where $k$ is twice the dual Coxeter number of $\g$. This causes no significant difficulties.
\end{Comment}

The construction described in Section \ref{autint} uses the action of $\xi_{V \otimes W}$ on $V \otimes W$. Thus we will need to define how $\Theta$ acts on a tensor product. In particular, we need a well defined notion of tensor product in the category of representations with chosen global bases. 

\begin{Definition} \label{pieces}
Let $V_{\lambda, \mu}^\nu$ denote the isotypic component of $V_\lambda \otimes V_\mu$ with highest weight $\nu$. 
Let $\displaystyle V_{\lambda, \mu}^{> \nu}:= \bigcup_{\gamma > \nu} V_{\lambda, \mu}^\gamma, \quad V_{\lambda, \mu}^{\geq \nu}:= \bigcup_{\gamma \geq \nu} V_{\lambda, \mu}^\gamma, \quad \text{ and } \quad Q_{\lambda, \mu}^\nu := V_{\lambda, \mu}^{\geq \nu} \huge{/} V_{\lambda, \mu}^{> \nu}.$ 
Here we use the partial order of the weight lattice where $\gamma \geq \nu$ iff $\gamma-\nu$ is a non-negative linear combination of the $\alpha_i$. 
\end{Definition}

\begin{Comment} \label{incl-iso} It is clear that the inclusion $V_{\lambda, \mu}^\nu \hookrightarrow V_{\lambda, \mu}^{\geq \nu}$ descends to an isomorphism from $V_{\lambda, \mu}^\nu$ to $Q_{\lambda, \mu}^\nu$.
\end{Comment}

\begin{Definition} \label{tensordef}
The tensor product $(V_\lambda, B_\lambda) \otimes (V_\mu, B_\mu)$ is defined to be  $(V_\lambda \otimes V_\mu, A)$, where $A$ is the unique global basis of $V \otimes W$ such that the projections of the highest weight elements of $A$ of weight $\nu$ in $Q_{\lambda, \mu}^\nu$ are equal to the projections of $b_\lambda \otimes b$ for those $b \in  S^\nu_{\lambda, \mu}$.  This is well defined by Comment \ref{incl-iso}. Extend by naturality to can a tensor product $(V, B) \otimes (W, C)$ for possibly reducible $V$ and $W$. 
\end{Definition}

\section{Proof that we obtain the $R$-matrix when $\g$ is of finite type} \label{theproof}

The proof of our main theorem uses a relationship between the $R$-matrix and the braid group element $T_{w_0}$ first observed in \cite{KR:1990} and \cite{LS}. Thus for this section we must restrict to finite type. We hope the result will prove to be true in greater generality, but establishing this would certainly require a different approach. We start by introducing a few more automorphisms of $U_q(\g)$ and of its representations.

\begin{Definition}
Let $\theta$ to be the diagram automorphism such that $w_0 (\alpha_i) = - \alpha_{\theta(i)},$ where $w_0$ is the longest element in the Weyl group.
\end{Definition}

\begin{Definition} \label{defGamma}
$C_\Gamma$ is the $\bc$-Hopf algebra automorphism of $U_q(\g)$ defined by
\begin{equation}
\begin{cases}
C_{\Gamma}(E_i) = - K_{\theta(i)} F_{\theta(i)}\\
C_{\Gamma}(F_i) = - E_{\theta(i)}K_{\theta(i)}^{-1}  \\
C_{\Gamma}(K_i)= K_{\theta(i)} \\
C_\Gamma (q) = q^{-1}.
\end{cases}
\end{equation}
Define the action of $\Gamma_{(V_\lambda, B_\lambda)}$ on $V_\lambda$ to be the unique $\bc$-linear endomorphism of each $V_\lambda$ which is compatible with $C_\Gamma$, and which is normalized so that $\Gamma (b_\lambda) = b_\lambda^\low$. Extend this by naturality to get the action of $\Gamma_{(V,B)}$ on any (possible reducible) representation $V$ with chosen global basis $B$. \end{Definition}

\begin{Comment}
It is a simple exercise to check that $C_\Gamma$ is in fact a Hopf algebra automorphism, and is compatible with a $\bc$-vector space automorphism of $V_\lambda$ which takes $b_\lambda$ to $b_\lambda^\low$.
\end{Comment}

\begin{Definition} \label{defGamma} \label{defJ} $C_{T_{w_0}}$ and $C_J$  are the $\bc(q)$-algebra automorphisms of $U_q(\g)$ defined by
\begin{align}
& \begin{cases}
C_{T_{w_0}}(E_i) = -F_{\theta(i)} K_{\theta(i)} \\
C_{T_{w_0}}(F_i) = -K_{\theta(i)}^{-1} E_{\theta(i)} \\
C_{T_{w_0}}(K_H) = K_{w_0(H)}, \text{ so that } C_{T_{w_0}}(K_i) = K_{\theta(i)}^{-1},
\end{cases} \\
& \begin{cases}
C_{J}(E_i) = K_i E_i  \\
C_{J}(F_i) = F_i K_i^{-1}\\
C_{J}(K_H) =  K_H.
\end{cases}
\end{align}
The systems of $\bc(q)$-vector space automorphisms $T_{w_0}$ and $J$ of each $V_\lambda$ are the unique automorphisms which are compatible with $C_{T_{w_0}}$ and $C_J$ respectively, and such that $T_{w_0} (b_\lambda^\text{low})=b_\lambda$ and $J(b_\lambda)= q^{( \lambda, \lambda) /2 + ( \lambda, \rho )} b_\lambda$, where $b_\lambda$ and $b_\lambda^{\text{low}}$ are the highest and lowest weight elements in some global basis $B_\lambda$.
\end{Definition}

\begin{Comment}
It is  straight forward exercise to show that the formulas in Definition \ref{defGamma} do define algebra automorphisms of $U_q(\g)$ and compatible vector space automorphisms of each $V_\lambda$. There is an action of the braid group on each $V_\lambda$, and $T_{w_0}$ is in fact the action of the longest element (for an appropriate choice of conventions). Note also that $J$ and $T_{w_0}$ do not depend on the choice of global basis as they are stable under simultaneously rescaling $b_\lambda$ and $b_\lambda^{\text{low}}.$ All of this is discussed in \cite{Rcommutor}.
\end{Comment}

\begin{Lemma} \label{whatisgamma} The following identities hold:
\begin{enumerate}
\item \label{Gebt} $\Gamma_{(V,B)} =  \barr_{(V,B)} \circ T_{w_0}^{-1},$ 

\item \label{TeK} $\Theta_{(V, B)}= K_{2H_\rho} \circ \barr_{(V, B)} 
\circ J,$

\item  \label{Jpower} For any weight vector $v \in V$ with $\wt(v)=\mu$, $J(v) =  q^{( \mu, \mu) /2 + ( \mu, \rho )} v,$

\item \label{Tpower} For any $b \in B$ with $\wt(b)=\mu$, $\Theta_{(V, B)} (b) =  q^{-( \mu, \mu)/2 + ( \mu, \rho )} b,$

\item \label{GT=JT} $ \Gamma^{-1}_{(V,B)} \circ \Theta_{(V,B)} =J T_{w_0}.$ 

\end{enumerate}
Here $\barr_{(V,B)}$ is the involution defined in Theorem \ref{canonical_basis}, part \eqref{gbar}. 
\end{Lemma}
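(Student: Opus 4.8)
The plan is to exploit the fact that each of the five operators is \emph{rigidly determined} by two pieces of data: compatibility with a fixed (anti)automorphism $C_{(-)}$ of $U_q(\g)$ in the sense of Section \ref{autint}, together with its value on the highest (or lowest) weight element of a global basis. Indeed, since $V_\lambda$ is generated by $b_\lambda$, compatibility forces $\xi(X b_\lambda) = C_\xi(X)\,\xi(b_\lambda)$ (with a $q\mapsto q^{-1}$ twist on scalars in the bar-linear case), so any two operators compatible with the same $C_\xi$ and agreeing on $b_\lambda$ coincide. Moreover, all the operators in the statement are defined by naturality from their restrictions to the $V_\lambda$, so it suffices to work on a single irreducible $V_\lambda$. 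Every identity will therefore be proved by checking that the right-hand side is compatible with the correct $C_{(-)}$ --- which reduces to an identity of (anti)automorphisms of $U_q(\g)$, verified on the generators $E_i, F_i, K_i, q$ --- and that it has the correct normalization on $b_\lambda$.

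I would prove (iii) first and directly. Since $C_J$ fixes the Cartan and $J$ is $\bc(q)$-linear, $J$ preserves weight spaces; the normalization gives the scalar $q^{(\lambda,\lambda)/2+(\lambda,\rho)}$ on $b_\lambda$. Using $C_J(F_i)=F_iK_i^{-1}$ and $K_i^{-1}v = q^{-(\alpha_i,\mu)}v$ on a weight-$\mu$ vector, compatibility gives $J(F_i v) = q^{-(\alpha_i,\mu)}\,F_i\,J(v)$, and a short computation (using $(\alpha_i,\alpha_i)/2 = d_i = (\alpha_i,\rho)$) shows the scalar $c(\mu)=q^{(\mu,\mu)/2+(\mu,\rho)}$ satisfies $c(\mu-\alpha_i) = q^{-(\alpha_i,\mu)}c(\mu)$. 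As the $F_i$ generate all of $V_\lambda$ from $b_\lambda$, induction down the weight lattice yields (iii). For (ii), I would show $\Phi := K_{2H_\rho}\circ\barr_{(V,B)}\circ J$ satisfies the two defining properties of $\Theta$. Unwinding the three compatibilities and commuting $K_{2H_\rho}$ past an algebra element via $\Ad(K_{2H_\rho})$ shows $\Phi$ is compatible with $\Ad(K_{2H_\rho})\circ C_\barr\circ C_J$, and one checks on generators that this composite equals $C_\Theta$ (the key point being $E_iK_i^{-1} = q^{2d_i}K_i^{-1}E_i$ and $\langle 2H_\rho,\alpha_i\rangle = 2d_i$). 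Evaluating $\Phi$ on $b_\lambda$ via (iii), $\barr(b_\lambda)=b_\lambda$, and $K_{2H_\rho}b_\lambda = q^{2(\rho,\lambda)}b_\lambda$ gives exactly $q^{-(\lambda,\lambda)/2+(\lambda,\rho)}b_\lambda$, so $\Phi=\Theta$ by uniqueness. Then (iv) is immediate: applying the three factors of (ii) to a basis vector $b$ of weight $\mu$ (with $\barr(b)=b$) produces $q^{2(\rho,\mu)}\cdot q^{-(\mu,\mu)/2-(\mu,\rho)}b = q^{-(\mu,\mu)/2+(\mu,\rho)}b$.

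For (i) I would run the same argument: $\barr_{(V,B)}\circ T_{w_0}^{-1}$ is compatible with $C_\barr\circ C_{T_{w_0}}^{-1}$, and checking on generators (using that $\theta$ is an involution, $\theta(\theta(i))=i$) verifies $C_\barr\circ C_{T_{w_0}}^{-1}=C_\Gamma$; equivalently one checks the cleaner identity $C_{T_{w_0}}\circ C_\barr\circ C_\Gamma=\Id$. Since $T_{w_0}^{-1}(b_\lambda)=b_\lambda^\low$ and $\barr$ fixes the basis vector $b_\lambda^\low$, the normalization $\Gamma(b_\lambda)=b_\lambda^\low$ holds, giving (i). Finally, (v) follows by combining (i) and (ii): $\Gamma^{-1}\Theta = (T_{w_0}\barr_{(V,B)})(K_{2H_\rho}\barr_{(V,B)} J)$. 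Since $\barr_{(V,B)}$ fixes basis vectors and $K_{2H_\rho}$ is the scalar $q^{2(\rho,\mu)}$ on the weight-$\mu$ space, $\barr_{(V,B)}\circ K_{2H_\rho}\circ\barr_{(V,B)} = K_{2H_\rho}^{-1}$, so $\Gamma^{-1}\Theta = T_{w_0}K_{2H_\rho}^{-1}J$. It then remains to show $T_{w_0}K_{2H_\rho}^{-1}J = JT_{w_0}$, i.e. $T_{w_0}^{-1}JT_{w_0} = K_{2H_\rho}^{-1}J$. As $T_{w_0}$ sends the weight-$\mu$ space to the weight-$w_0\mu$ space, $T_{w_0}^{-1}JT_{w_0}$ acts on a weight-$\mu$ vector by the scalar $q^{(w_0\mu,w_0\mu)/2+(w_0\mu,\rho)}$; using that $w_0$ is orthogonal so $(w_0\mu,w_0\mu)=(\mu,\mu)$, and that $w_0\rho=-\rho$ in finite type, this equals $q^{(\mu,\mu)/2-(\mu,\rho)}$, matching the scalar $q^{(\mu,\mu)/2+(\mu,\rho)-2(\mu,\rho)}$ of $K_{2H_\rho}^{-1}J$.

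The routine part is the verification of the three automorphism identities on generators; these are mechanical but require care with the diagram automorphism $\theta$ and the various $K$-twists. The main obstacle is conceptual rather than computational: one must be scrupulous about the interplay between the $\bc(q)$-linear operators ($J$, $T_{w_0}$, $K_{2H_\rho}$) and the bar-linear ones ($\barr_{(V,B)}$, $\Theta$, $\Gamma$), since it is precisely the $q\mapsto q^{-1}$ twists that make the compositions land in the correct (bar-linear) compatibility class; and in (v) one must correctly invoke $w_0\rho=-\rho$, which is where finiteness of type enters.
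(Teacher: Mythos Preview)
Your proposal is correct and, for parts (i)--(iv), essentially identical to the paper's argument: both verify the relevant identity among the algebra (anti)automorphisms on generators and then check the normalization on $b_\lambda$, and the induction for (iii) is the same.

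The only genuine difference is in part (v). The paper treats (v) by the same two-step template as (i) and (ii): it checks directly on generators that $C_\Gamma^{-1}\circ C_\Theta = C_J\circ C_{T_{w_0}}$, and then verifies the normalization on the lowest weight vector $b_\lambda^{\low}$, using (iii) and (iv) to compute that both sides send $b_\lambda^{\low}$ to $q^{(\lambda,\lambda)/2+(\lambda,\rho)}b_\lambda$. You instead deduce (v) from (i) and (ii) by composing, simplifying $\barr\, K_{2H_\rho}\,\barr = K_{2H_\rho}^{-1}$, and then reducing $T_{w_0}K_{2H_\rho}^{-1}J = JT_{w_0}$ to a scalar identity on weight spaces via $w_0\rho=-\rho$. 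Both routes are short and valid; the paper's is more uniform with the rest of the lemma, while yours has the virtue of making explicit exactly which finite-type fact ($w_0\rho=-\rho$, or equivalently the existence of $b_\lambda^{\low}$) is being used.
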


\begin{proof} Let $C_{K_{2H_\rho}}$ be the algebra automorphism of $U_q(\g)$ defined by $C_{K_{2H_\rho}}(X) =  K_{2H_\rho} X K_{2H_\rho}^{-1}.$ It follows directly from \eqref{KpastEF} that
\begin{equation} \label{C2r}
C_{K_{2H_\rho}} (K_i^{-1}E_i)= E_iK_i^{-1} \quad \text{ and } C_{K_{2H_\rho}} (F_i K_i)= K_i F_i.
\end{equation}
Using \eqref{C2r} and the relevant definitions, a simple check on generators shows that 
\begin{align}
C_{\Gamma} =  C_{\barr} \circ C_{T_{w_0}}^{-1}, \quad  \hspace{-0.2cm} C_{\Theta}= C_{K_{2H_\rho}} \circ C_{\barr} \circ C_{J}, 
\quad   \hspace{-0.2cm} \text{ and } \quad \hspace{-0.2cm} C_{\Gamma}^{-1} \circ C_{\Theta} =C_{J} \circ C_{T_{w_0}}.
\end{align}
Thus, to prove \eqref{Gebt}, \eqref{TeK} and \eqref{GT=JT}, it suffices to check each identity when each side acts on any one chosen vector $b$ in each $V_\lambda$. For parts \eqref{Gebt} and \eqref{TeK}, choose $b=b_\lambda$ and the identity is immediate from definitions. 

For part \eqref{Jpower}, it is sufficient to consider $V=V_\lambda$. By Definition \ref{defJ}, \eqref{Jpower} holds for $b=b_\lambda$. Furthermore, vectors of the form $F_{i_k} \cdots F_{i_1} b_\lambda$ generate $V_\lambda$ as a $\bc(q)$ module. Assume that $v$ is a weight vector of weight $\mu$, and $J(v) = q^{( \mu, \mu) /2 + ( \mu, \rho )} v$. Fix $i \in I$. Then
\begin{equation}
\begin{aligned}
J(F_i v) & = C_J(F_i) J(v)
=F_i K_i^{-1} q^{( \mu, \mu) /2 + ( \mu, \rho )} v
= F_i q^{-\langle d_i H_i, \mu \rangle}  q^{( \mu, \mu) /2 + ( \mu, \rho )} v \\
& =q^{-(\alpha_i, \mu)}  q^{( \mu, \mu) /2 + ( \mu, \rho )} v
=q^{(\mu-\alpha_i, \mu-\alpha_i)/2+ (\mu-\alpha_i, \rho)} v.
\end{aligned}
\end{equation}
The claim now follows by induction on $k$. 

Part \eqref{Tpower} follows by directly calculating the action of the right side of \eqref{TeK} on $b$ and using Part \eqref{Jpower} to evaluation the action of $J$.

The definitions of $\Theta_{(V,B)}$ and $\Gamma_{(V,B)}$, along with parts \eqref{Jpower} and \eqref{Tpower}, now immediately imply that  $\Gamma^{-1}_{(V_\lambda,B_\lambda)} \circ \Theta_{(V_\lambda,B_\lambda)}(b_\lambda^\text{low}) = J T_{w_0}(b_\lambda^{\text{low}})= q^{(\lambda, \lambda)/2+(\lambda, \rho)} b_\lambda$, completing the proof of \eqref{GT=JT}. 
\end{proof}

We also need the following construction of the $R$ matrix due to Kirillov-Reshetikhin and Levendorskii-Soibelman. Due to a different choice of conventions, our $T_{w_0}$ is $ K_{H_\rho}^{-1} T_{w_0}^{-1}$ in those papers, so we have modified the statement accordingly. 
 As with Theorem \ref{sR}, this expression is written using the $h$-adic completion of $U_h(\g)$, but gives a well defined action on $V \otimes W$ for any finite dimensional type {\bf 1} $U_q(\g)$-module. 

\begin{Theorem}{\cite[Theorem 3]{KR:1990}, \cite[Theorem 1]{LS}} \label{sR}
The standard universal $R$-matrix can be realized as
\begin{equation} \label{R21}
R=  \exp \left( h \sum_{i, j \in I} (B^{-1})_{ij} H_i \otimes H_j \right) (T_{w_0}^{-1} \otimes T_{w_0}^{-1}) \Delta(T_{w_0}). 
\end{equation} 
\qed
\end{Theorem}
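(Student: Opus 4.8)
The plan is to verify that the right-hand side of \eqref{R21} satisfies the properties characterizing the standard universal $R$-matrix. Recall that $R$ is determined, up to the usual normalization, by being an invertible element of $\widetilde{U_q(\g)\otimes U_q(\g)}$ intertwining the two coproducts, $R\,\Delta(x) = \Delta^{\mathrm{op}}(x)\,R$ for all $x\in U_q(\g)$, together with a triangularity condition fixing its leading term; the intertwining relation alone does not pin $R$ down, since it may be modified by central grouplike factors. Writing $\kappa := \exp\!\bigl(h\sum_{i,j}(B^{-1})_{ij}H_i\otimes H_j\bigr)$ and $\mathcal{R} := \kappa\,(T_{w_0}^{-1}\otimes T_{w_0}^{-1})\,\Delta(T_{w_0})$ for that right-hand side, I would first establish the intertwining relation for $\mathcal{R}$ and then match leading terms against \eqref{R12}.

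For the intertwining relation the natural move is to conjugate. Since $T_{w_0}$ implements the algebra automorphism $C_{T_{w_0}}$ and $\Delta$ is an algebra homomorphism, conjugation by $\Delta(T_{w_0})$ is $\Delta\circ C_{T_{w_0}}$ and conjugation by $T_{w_0}^{-1}\otimes T_{w_0}^{-1}$ is $C_{T_{w_0}}^{-1}\otimes C_{T_{w_0}}^{-1}$, so that
\begin{equation*}
\mathcal{R}\,\Delta(x)\,\mathcal{R}^{-1} = \kappa\,(C_{T_{w_0}}^{-1}\otimes C_{T_{w_0}}^{-1})\,\Delta\bigl(C_{T_{w_0}}(x)\bigr)\,\kappa^{-1},
\end{equation*}
and the claim reduces to showing this equals $\Delta^{\mathrm{op}}(x)$. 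A direct check on the generators $E_i,F_i,K_H$, using the formulas for $C_{T_{w_0}}$ and the coproduct \eqref{coproduct}, shows that $C_{T_{w_0}}$ is neither a coalgebra homomorphism nor an anti-homomorphism: comparing $\Delta\bigl(C_{T_{w_0}}(E_i)\bigr)$ with $(C_{T_{w_0}}\otimes C_{T_{w_0}})\Delta^{\mathrm{op}}(E_i)$, the two have the same shape and differ only by grouplike ($K$) factors. The Cartan prefactor $\kappa$ exists precisely to repair this: conjugation by $\kappa$ sends $E_i\otimes 1$ to $E_i\otimes K_i$ and $1\otimes F_i$ to $K_i^{-1}\otimes F_i$, converting $\Delta$ into $\Delta^{\mathrm{op}}$ on exactly these grouplike discrepancies. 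I would make this precise by checking the displayed identity on each generator.

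The part of the computation not visible at the grouplike level is the real obstacle: one must control the unipotent factor, showing that $(T_{w_0}^{-1}\otimes T_{w_0}^{-1})\Delta(T_{w_0})$ contributes exactly the quasi-$R$-matrix $\prod_\beta \exp_{q_\beta}\!\bigl[(1-q_\beta^{-2})E_\beta\otimes F_\beta\bigr]$ of \eqref{R12} and nothing more. The honest route is to fix a reduced word $s_{i_1}\cdots s_{i_N}$ for $w_0$, write $T_{w_0}$ as the corresponding product of simple braid operators, and induct: this needs the coproduct formula for each $T_i$ together with the rank-one computation in $U_q(\mathrm{sl}_2)$, assembled using the braid relations. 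Alternatively one can invoke Lusztig's quasi-$R$-matrix and its characterization through the bar involution and the $T_i$, identifying $\kappa^{-1}\mathcal{R}$ with it directly. Since exactly this is carried out in \cite{KR:1990} and \cite{LS}, in the paper I would cite their computation and restrict my own work to tracking the stated convention change --- their $T_{w_0}$ being our $K_{H_\rho}^{-1}T_{w_0}^{-1}$ --- verifying that this substitution transports their formula into \eqref{R21}.
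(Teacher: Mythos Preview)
The paper gives no proof of this theorem: it is stated as a result of \cite{KR:1990} and \cite{LS}, closed with a \qed, and the only work done is the parenthetical remark that their $T_{w_0}$ corresponds to our $K_{H_\rho}^{-1}T_{w_0}^{-1}$, so the formula has been adjusted accordingly. Your final paragraph lands on exactly this---cite the original sources and track the convention change---which is all the paper does; the intertwining-relation and quasi-$R$-matrix discussion you sketch beforehand is correct as motivation but is not part of the paper's treatment.
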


\begin{Corollary} \label{xR_R_cor}
$\displaystyle (T_{w_0}^{-1} \otimes T_{w_0}^{-1}) \Delta(T_{w_0})= \prod_\beta \exp_{q_\beta} \left[ ( 1 - q_\beta^{-2} )E_\beta \otimes F_\beta \right],$

\noindent 
 where the product is over all the positive roots of $\g$, and the order of the terms is such that $\beta_r$ appears to the left of $\beta_s$ if $r > s$.
\end{Corollary}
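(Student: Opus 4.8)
The plan is to obtain the identity simply by comparing the two independent descriptions of the \emph{same} object, the standard universal $R$-matrix, furnished by Theorem \ref{Rnormal} and Theorem \ref{sR}. Fix finite dimensional type {\bf 1} modules $V$ and $W$; by the remarks accompanying those theorems, both right-hand sides give well defined operators on $V \otimes W$, and both equal $R$ there. The crucial observation is that the two expressions begin with the \emph{identical} Cartan prefactor $\exp\bigl(h \sum_{i,j} (B^{-1})_{ij} H_i \otimes H_j\bigr)$. Equating the two formulas for $R$ on $V \otimes W$ therefore yields
\[
\exp\Bigl(h \textstyle\sum_{i,j} (B^{-1})_{ij} H_i \otimes H_j\Bigr) \prod_\beta \exp_{q_\beta}\bigl[(1-q_\beta^{-2}) E_\beta \otimes F_\beta\bigr]
= \exp\Bigl(h \textstyle\sum_{i,j} (B^{-1})_{ij} H_i \otimes H_j\Bigr) (T_{w_0}^{-1} \otimes T_{w_0}^{-1}) \Delta(T_{w_0}),
\]
and the corollary follows once the common left factor is cancelled.

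To justify the cancellation I would note that the Cartan prefactor acts on $V \otimes W$ diagonally with respect to the weight decomposition: on a weight vector $v \otimes w$ with $\wt(v) = \mu$ and $\wt(w) = \nu$ it acts as multiplication by $\exp\bigl(h \sum_{i,j} (B^{-1})_{ij} \langle H_i, \mu\rangle \langle H_j, \nu\rangle\bigr)$, which is a (possibly fractional) power of $q$ and in particular nonzero. Hence on each $V \otimes W$ this prefactor is an invertible operator, and left-multiplying the displayed equation by its inverse produces exactly the asserted identity as operators on $V \otimes W$. This is also the point at which the fractional powers of $q$ flagged after Theorem \ref{Rnormal} enter, accounting for why one adjoins a suitable root of $q$ to the base field; it causes no real difficulty.

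The only genuine content beyond this bookkeeping is confirming that the two leading exponential factors coincide verbatim, which amounts to checking that the symmetric matrix $B$, and hence $B^{-1}$, is the same in both source statements, and that the convention mismatch in $T_{w_0}$ has already been absorbed. Both points are settled by the remarks preceding the two theorems: the note that our $T_{w_0}$ is $K_{H_\rho}^{-1} T_{w_0}^{-1}$ in \cite{KR:1990} and \cite{LS} records that Theorem \ref{sR} has been renormalized to our conventions, so no further modification of the exponential factors is required. I expect this verification that the two Cartan prefactors agree on the nose to be the main (indeed essentially the only) obstacle; once it is confirmed, the cancellation of an invertible diagonal operator is immediate.
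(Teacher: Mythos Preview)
Your proposal is correct and follows essentially the same approach as the paper's own proof, which reads in its entirety: ``Follows immediately from Theorems \ref{Rnormal} and \ref{sR}, since the action of $R$ on $V_\lambda \otimes V_\mu$ is invertible.'' The only minor difference is that you justify the cancellation by observing directly that the Cartan prefactor acts diagonally by nonzero scalars, whereas the paper cites the invertibility of $R$ itself; your justification is if anything more direct.
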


\begin{proof}
Follows immediately from Theorems \ref{Rnormal} and \ref{sR}, since the action of $R$ on $V_\lambda \otimes V_\mu$ is invertible.
\end{proof}

As discussed in \cite{Rcommutor}, the following is equivalent to Theorem \ref{sR}:

\begin{Corollary} (see \cite[Comment 7.3]{Rcommutor}) \label{xR}
Let $X=JT_{w_0}$. Then $$R= (X^{-1} \otimes X^{-1}) \Delta(X).$$ \qed 
\end{Corollary}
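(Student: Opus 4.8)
The plan is to reduce the identity directly to Theorem \ref{sR} by expanding $X = JT_{w_0}$ and isolating how the diagonal factor $J$ interacts with $T_{w_0}$ and with the coproduct. Since $X$ is the composite system $\{J_V \circ (T_{w_0})_V\}$, applying it at $V\otimes W$ gives $\Delta(X)=\Delta(J)\Delta(T_{w_0})$, where $\Delta(J)$ denotes $J$ acting on the tensor product in the sense of Section \ref{autint}. On the other hand $X^{-1}\otimes X^{-1}$ is the external tensor product of one-factor operators, so $X^{-1}\otimes X^{-1}=(T_{w_0}^{-1}\otimes T_{w_0}^{-1})(J^{-1}\otimes J^{-1})$. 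Hence
$$(X^{-1}\otimes X^{-1})\Delta(X) = (T_{w_0}^{-1}\otimes T_{w_0}^{-1})(J^{-1}\otimes J^{-1})\Delta(J)\Delta(T_{w_0}).$$
Comparing with \eqref{R21}, it suffices to prove two things: that the middle factor $(J^{-1}\otimes J^{-1})\Delta(J)$ equals the Cartan part $D:=\exp\left(h\sum_{ij}(B^{-1})_{ij}H_i\otimes H_j\right)$, and that $D$ commutes with $T_{w_0}^{-1}\otimes T_{w_0}^{-1}$.

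For the first point I would invoke Lemma \ref{whatisgamma}\eqref{Jpower}, which holds in any representation and so in particular in $V\otimes W$. On a weight vector $v\otimes w$ with $\wt(v)=\eta$ and $\wt(w)=\zeta$, the operator $\Delta(J)$ acts by $q^{(\eta+\zeta,\eta+\zeta)/2+(\eta+\zeta,\rho)}$ while $J^{-1}\otimes J^{-1}$ acts by $q^{-(\eta,\eta)/2-(\eta,\rho)-(\zeta,\zeta)/2-(\zeta,\rho)}$; expanding $(\eta+\zeta,\eta+\zeta)/2$ shows the self-terms and the $\rho$-shifts cancel, leaving exactly $q^{(\eta,\zeta)}$. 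It then remains to identify $q^{(\eta,\zeta)}$ with the action of $D$, which is the linear-algebra identity $\sum_{ij}(B^{-1})_{ij}\langle H_i,\eta\rangle\langle H_j,\zeta\rangle=(\eta,\zeta)$. This follows from $(H_i,H_j)=B_{ij}$ and $(\alpha_i,\lambda)=d_i\langle H_i,\lambda\rangle$ by checking on $\eta=\alpha_k$, $\zeta=\alpha_l$ and using $B^{-1}B=\Id$.

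For the commutation I would use that the braid operator $T_{w_0}$ carries the $\eta$-weight space to the $w_0\eta$-weight space, which is forced by the compatibility of $T_{w_0}$ with $C_{T_{w_0}}(K_H)=K_{w_0(H)}$. Since $D$ multiplies a bi-weight $(\eta,\zeta)$ vector by $q^{(\eta,\zeta)}$ and the form is $W$-invariant, we have $(w_0\eta,w_0\zeta)=(\eta,\zeta)$, so $D$ gives the same scalar whether applied before or after $T_{w_0}^{-1}\otimes T_{w_0}^{-1}$; thus the two commute. Combining the two points,
$$(X^{-1}\otimes X^{-1})\Delta(X) = D\,(T_{w_0}^{-1}\otimes T_{w_0}^{-1})\Delta(T_{w_0}) = R$$
by Theorem \ref{sR}.

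The conceptual heart of the argument, and the step I would watch most carefully, is the computation $(J^{-1}\otimes J^{-1})\Delta(J)=D$: the normalization $J(b_\lambda)=q^{(\lambda,\lambda)/2+(\lambda,\rho)}b_\lambda$ is chosen precisely so that the quadratic self-terms and the $\rho$-shifts telescope under the coproduct, leaving only the symmetric cross term $(\eta,\zeta)$ that is the Cartan part of the universal $R$-matrix. The remaining obstacles are bookkeeping rather than substance: keeping track of the fractional powers of $q$ flagged in Section \ref{notation}, and confirming the identity for $B^{-1}$, neither of which poses a genuine difficulty.
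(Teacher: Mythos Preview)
Your argument is correct and is precisely the equivalence with Theorem~\ref{sR} that the paper invokes (deferring the details to \cite[Comment~7.3]{Rcommutor} rather than writing them out): one computes $(J^{-1}\otimes J^{-1})\Delta(J)$ on weight vectors via Lemma~\ref{whatisgamma}\eqref{Jpower} to recover the Cartan factor $q^{(\eta,\zeta)}$, and then commutes it past $T_{w_0}^{-1}\otimes T_{w_0}^{-1}$ using $W$-invariance of the form. There is nothing to add; your verification of $\sum_{ij}(B^{-1})_{ij}\langle H_i,\eta\rangle\langle H_j,\zeta\rangle=(\eta,\zeta)$ and of the weight-space behaviour of $T_{w_0}$ are the only points that need checking, and both are handled correctly.
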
 

\begin{Lemma} \label{Gamma_Lemma} Fix type {\bf 1} finite dimensional $U_q(\g)$ representations with chosen global bases $(V,B)$ and $(W, C)$. The operator $(\Gamma_{(V,B)} \otimes \Gamma_{(W,C)}) \Gamma_{(V \otimes W, A)})^{-1}$  acts on $V \otimes W$ as the identity, where $A$ is the global basis of $V \otimes W$ constructed from $B$ and $C$ in Definition \ref{tensordef}. \end{Lemma}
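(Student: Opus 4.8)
The plan is to prove the equivalent assertion $\Gamma_{(V\otimes W, A)} = \Gamma_{(V,B)} \otimes \Gamma_{(W,C)}$, where the right-hand side denotes the bar-linear endomorphism $v \otimes w \mapsto \Gamma_{(V,B)}(v) \otimes \Gamma_{(W,C)}(w)$ (this is well defined since both factors invert $q$). Both operators are bar-linear endomorphisms of $V \otimes W$, so I would first exploit the rigidity of $C_\Gamma$-compatible maps: $\Gamma_{(V\otimes W,A)}$ is by construction compatible with $C_\Gamma$, and a bar-linear $C_\Gamma$-compatible endomorphism is determined by its values on a set of highest weight vectors generating the module, since compatibility propagates the value along the $U_q(\g)^-$-action and bar-linearity fixes the scalars. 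Thus it suffices to prove that the two maps agree on highest weight vectors.

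Next I would check that $\Gamma_{(V,B)} \otimes \Gamma_{(W,C)}$ is itself compatible with $C_\Gamma$. The one ingredient is that $C_\Gamma$ is a coalgebra automorphism, not merely an anti-automorphism (this is exactly the assertion that $C_\Gamma$ is a Hopf algebra automorphism, recorded in the Comment following its definition). Writing $\Delta(x) = \sum x_{(1)} \otimes x_{(2)}$ and combining $\Delta \circ C_\Gamma = (C_\Gamma \otimes C_\Gamma) \circ \Delta$ with the compatibility of each factor, a direct computation gives $(\Gamma_{(V,B)} \otimes \Gamma_{(W,C)})(x \cdot \zeta) = C_\Gamma(x) \cdot (\Gamma_{(V,B)} \otimes \Gamma_{(W,C)})(\zeta)$ for all $x \in U_q(\g)$. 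Hence the discrepancy $(\Gamma_{(V,B)} \otimes \Gamma_{(W,C)}) \circ \Gamma_{(V\otimes W,A)}^{-1}$ is compatible with $C_\Gamma \circ C_\Gamma^{-1} = \Id$, i.e.\ it is an honest $\bc(q)$-linear $U_q(\g)$-module automorphism of $V \otimes W$ (the coalgebra-automorphism case recorded in Comment \ref{when-aut}). By complete reducibility it is the identity as soon as the two maps agree on highest weight vectors, and by bar-linearity it is enough to check this on a $\bc(q)$-basis of the highest weight space, for which I would take the highest weight elements of the global basis $A$.

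This reduces the lemma to the key step: for each highest weight element $a \in A$ of weight $\nu$, show $(\Gamma_{(V,B)} \otimes \Gamma_{(W,C)})(a) = a^{\low}$, where $a^{\low}$ is the lowest weight element of $A$ in the component generated by $a$, and equals $\Gamma_{(V\otimes W,A)}(a)$ by the defining normalization of $\Gamma$. By Definition \ref{tensordef} one has $a \equiv b_\lambda \otimes b \pmod{V_{\lambda,\mu}^{>\nu}}$ for some $b \in S^\nu_{\lambda,\mu}$, while factorwise $\Gamma_{(V,B)}(b_\lambda) = b_\lambda^{\low}$ and, by $\Gamma = \barr \circ T_{w_0}^{-1}$ (Lemma \ref{whatisgamma}\eqref{Gebt}), $\Gamma_{(W,C)}$ carries the global basis $C$ to itself and reverses weights. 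My route to identifying the image is to pass to the crystal at $q^{-1}=0$: by Theorem \ref{canonical_basis}\eqref{gbar} the basis $A$ reduces to the crystal basis of $V \otimes W$, by Theorem \ref{crystal_tensor} the tensor product crystal has highest weight vertices $b_\lambda \otimes b$, and modulo $q^{-1}\LL$ the map $\Gamma_{(V,B)} \otimes \Gamma_{(W,C)}$ sends the class of $a$ (that is, $b_\lambda \otimes b$) to the class of the lowest weight vertex of its connected component, which is the class of $a^{\low}$. One then upgrades this congruence to an exact equality, both vectors being lowest weight vectors spanning the one-dimensional lowest weight space of the component.

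The main obstacle is precisely this last matching: proving $(\Gamma_{(V,B)} \otimes \Gamma_{(W,C)})(a) = a^{\low}$ exactly, rather than merely modulo $V_{\lambda,\mu}^{>\nu}$ or modulo $q^{-1}\LL$. The trouble is that $A$ is not the product basis $\{b' \otimes c'\}$ — it differs by lower terms in the weight filtration — and $\Gamma_{(V,B)} \otimes \Gamma_{(W,C)}$ does not visibly preserve $A$, so the lower-order corrections must be controlled. I expect the clean way to finish is to combine the crystal-lattice statement above (which pins down the leading term) with the rigidity from the second paragraph: since the discrepancy is a module automorphism, it is a scalar on each isotypic component, and the crystal computation forces that scalar to be $1$. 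I would avoid routing the argument through the quasi-$R$-matrix description of the bar involution on $V \otimes W$, as that risks circularity with the main theorem.
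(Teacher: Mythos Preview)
Your reduction to a statement about highest weight vectors via the fact that $C_\Gamma$ is a Hopf-algebra (hence coalgebra) automorphism is correct and matches the paper. The gap is in your last two paragraphs, and it is genuine.

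First, the assertion ``since the discrepancy is a module automorphism, it is a scalar on each isotypic component'' is false whenever the multiplicity of $V_\nu$ in $V_\lambda\otimes V_\mu$ exceeds $1$: a module automorphism of $V_\nu^{\oplus m}$ is an arbitrary element of $\mathrm{GL}_m(\bc(q))$ acting on the multiplicity space. So even granting a crystal congruence $m^\Gamma(a)\equiv a\pmod{q^{-1}\LL}$ on each highest weight element $a\in A$, nothing rules out, say, $m^\Gamma(a_1)=a_1+q^{-1}a_2$ for two highest weight elements $a_1,a_2$ of the same weight. Your ``upgrade'' from a congruence to an exact equality therefore does not go through; the one-dimensionality you invoke is of the lowest weight space of a {\it single irreducible summand}, but $(\Gamma\otimes\Gamma)(a)$ has no reason a priori to lie in the summand generated by $a$.

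Second, the crystal step itself is not justified. You use that $\Gamma_{(W,C)}$ ``carries the global basis $C$ to itself'': via $\Gamma=\barr\circ T_{w_0}^{-1}$ this amounts to $T_{w_0}$ preserving the global basis, which is nontrivial and not provided by the paper. And even granting that $\Gamma$ induces a crystal involution on each factor, the claim that $\Gamma\otimes\Gamma$ sends the highest weight vertex $b_\lambda\otimes b$ of the tensor crystal to the lowest weight vertex of the {\it same} connected component is exactly the combinatorial content of the lemma and needs its own proof. You also silently conflate two different congruences, $a\equiv b_\lambda\otimes b\pmod{V_{\lambda,\mu}^{>\nu}}$ and $a\equiv b_\lambda\otimes b\pmod{q^{-1}\LL}$.

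Finally, your worry about circularity is misplaced, and this is precisely what the paper exploits. The identity
\[
(T_{w_0}^{-1}\otimes T_{w_0}^{-1})\Delta(T_{w_0})=\prod_\beta \exp_{q_\beta}\!\big[(1-q_\beta^{-2})E_\beta\otimes F_\beta\big]
\]
is an {\it input} from Kirillov--Reshetikhin/Levendorskii--Soibelman (Corollary~\ref{xR_R_cor}), independent of Theorem~\ref{main}. The paper's proof plugs $\Gamma=\barr\circ T_{w_0}^{-1}$ into $m^\Gamma$ to get
\[
m^\Gamma=(\barr_{(V_\lambda,B_\lambda)}\otimes\barr_{(V_\mu,B_\mu)})\;\prod_\beta \exp_{q_\beta}\!\big[(1-q_\beta^{-2})E_\beta\otimes F_\beta\big]\;\barr_{(V_\lambda\otimes V_\mu,A)},
\]
then works in the subquotient $Q_{\lambda,\mu}^\nu$, where by Definition~\ref{tensordef} the image of the highest weight element of $A$ literally equals $\overline{\overline{b_\lambda\otimes b}}$. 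There each $E_\beta$ kills $b_\lambda$, so the product acts as the identity on $b_\lambda\otimes b$, and both bar maps fix it by construction. This handles all multiplicities at once, with no crystal argument needed.
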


\begin{proof} It suffices to consider the case when $V=V_\lambda$ and $W=V_\mu$ are irreducible. Set  
\begin{equation}
m^\Gamma:=(\Gamma_{(V_\lambda, B_\lambda)} \otimes \Gamma_{(V_\mu, B_\mu)})(\Gamma_{(V_\lambda \otimes V_\mu, A)})^{-1}: V_\lambda \otimes V_\mu \rightarrow V_\lambda \otimes V_\mu.
\end{equation}
We must show that $m^\Gamma$ is the identity.
$C_\Gamma$ is a Hopf algebra automorphism of $U_q(\g)$, so, as in Section \ref{autint}, it follows that 
$m^\Gamma$
is an automorphism of $U_q(\g)$ representations. In particular, $m^\Gamma$ preserves isotypic components of $V_\lambda \otimes V_\mu$ and acts on each sub-quotient $Q_{\lambda, \mu}^\nu$ (see Definition \ref{pieces}). It is sufficient to show that the action on $Q_{\lambda, \mu}^\nu$ is the identity for all $\nu$. In fact it is sufficient to consider the action on the highest weight space of $Q_{\lambda, \mu}^\nu$, since this generates $Q_{\lambda, \mu}^\nu$. This highest weight space has a basis consisting of  $\{ \overline{\overline{b_\lambda \otimes b}} : b \in {S}_{\lambda, \mu}^\nu \}$, where $S_{\lambda, \mu}^\nu$ is as in Definition \ref{Slmn} and we use the notation $\overline{\overline{a \otimes b}}$ to denote the image of $a \otimes b$ in $Q_{\lambda, \mu}^\nu$.

By Lemma \ref{whatisgamma} part \eqref{Gebt} and Corollary \ref{xR_R_cor}, 
\begin{equation} \label{Rbig}
\begin{aligned}
m^\Gamma &=  ( \barr_{(V_\lambda, B_\lambda)} \otimes \barr_{(V_\mu, B_\mu)})  (T_{w_0}^{-1} \otimes T_{w_0}^{-1}) \Delta(T_{w_0}){\barr_{(V_\lambda \otimes V_\mu, A)}} \\
&= ( \barr_{(V_\lambda, B_\lambda)} \otimes \barr_{(V_\mu, B_\mu)})    \prod_\beta \exp_{q_\beta} \left[ ( 1 - q_\beta^{-2} )E_\beta \otimes F_\beta \right]  {\barr_{(V_\lambda \otimes V_\mu, A)}},
\end{aligned}
\end{equation}
For convenience, set
\begin{equation}
\Psi:=  ( \barr_{(V_\lambda, B_\lambda)} \otimes \barr_{(V_\mu, B_\mu)})  \prod_\beta \exp_{q_\beta} \left[ ( 1 - q_\beta^{-2} )E_\beta \otimes F_\beta \right] .
\end{equation}
Both $m^\Gamma$ and $ {\barr_{(V_\lambda \otimes V_\mu, A)}}$ act in a well defined way on each $Q_{\lambda, \mu}^\nu$, which implies that $\Psi $ does as well. 

The global basis $A$ was chosen so that $ {\barr_{(V_\lambda \otimes V_\mu, A)}} (\overline{\overline{b_\lambda \otimes b}}) = \overline{\overline{b_\lambda \otimes b}}$ (see Definition \ref{tensordef}).
Since all $E_\beta$ kill $b_\lambda$ and $ ( \barr_{(V_\lambda, B_\lambda)} \otimes \barr_{(V_\mu, B_\mu)})  $ preserves $b_\lambda \otimes b$ by definition, we see that $\Psi (b_\lambda \otimes b) =b_\lambda \otimes b$, and, taking the image in $Q_{\lambda, \mu}^\nu$, $\Psi (\overline{\overline{b_\lambda \otimes b}})= \overline{\overline{b_\lambda \otimes b}}.$ Thus, using \eqref{Rbig}, we see that $m^\Gamma$ acts on $\overline{\overline{b_\lambda \otimes b}}$ as the identity. The lemma follows. \end{proof}

\begin{Theorem} \label{main}Fix type {\bf 1} finite dimensional $U_q(\g)$ representations with chosen global bases $(V,B)$ and $(W, C)$.  Then $\big(\Theta_{(V,B)}^{-1} \otimes \Theta_{(W,C)}^{-1}\big) \Theta_{(V \otimes W, A)}$ acts on $V \otimes W$ as the standard $R$-matrix, where $A$ is the global basis of $V \otimes W$ constructed from $B$ and $C$ in Definition \ref{tensordef}. This holds independently of the choices of global bases $B$ and $C$. 
\end{Theorem}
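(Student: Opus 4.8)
The plan is to exploit the factorization of $\Theta$ into a basis-dependent bar-linear piece and a genuine element of the completion. By Lemma \ref{whatisgamma}\eqref{GT=JT}, on any representation with chosen global basis we have $\Gamma^{-1}\circ\Theta = JT_{w_0} =: X$, so $\Theta = \Gamma\circ X$ and $\Theta^{-1} = X^{-1}\circ\Gamma^{-1}$. The crucial point is that, unlike $\Theta$ and $\Gamma$, the operator $X = JT_{w_0}$ is $\bc(q)$-linear and compatible with the $\bc(q)$-algebra automorphisms $C_J$ and $C_{T_{w_0}}$; hence $X$ is an honest element of the completion of $U_q(\g)$, and its action on a tensor product is computed through the coproduct, i.e. $X_{V\otimes W} = \Delta(X)$. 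Applying Lemma \ref{whatisgamma}\eqref{GT=JT} to the tensor product equipped with the global basis $A$ of Definition \ref{tensordef} therefore gives $\Theta_{(V\otimes W, A)} = \Gamma_{(V\otimes W, A)}\circ\Delta(X)$, while on the factors $\Theta_{(V,B)} = \Gamma_{(V,B)}\circ X$ and $\Theta_{(W,C)} = \Gamma_{(W,C)}\circ X$.

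Substituting these factorizations into $\big(\Theta_{(V,B)}^{-1}\otimes\Theta_{(W,C)}^{-1}\big)\Theta_{(V\otimes W, A)}$ and pulling the $\bc(q)$-linear $X$'s to the outer left, I would rewrite it as
\begin{equation*}
\big(X^{-1}\otimes X^{-1}\big)\,\big(\Gamma_{(V,B)}^{-1}\otimes\Gamma_{(W,C)}^{-1}\big)\,\Gamma_{(V\otimes W, A)}\,\Delta(X).
\end{equation*}
The three middle factors are exactly the inverse of the operator treated in Lemma \ref{Gamma_Lemma}, which asserts that $\big(\Gamma_{(V,B)}\otimes\Gamma_{(W,C)}\big)\Gamma_{(V\otimes W, A)}^{-1}$ is the identity; equivalently $\big(\Gamma_{(V,B)}^{-1}\otimes\Gamma_{(W,C)}^{-1}\big)\Gamma_{(V\otimes W, A)} = \Id$. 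Hence the whole expression collapses to $\big(X^{-1}\otimes X^{-1}\big)\Delta(X)$, which is the standard $R$-matrix by Corollary \ref{xR}. Independence from the choice of global bases $B$ and $C$ is then automatic, since neither $X$ nor $R$ refers to any global basis.

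Most of the substantive work has already been carried out in the preparatory lemmas, so this final step is essentially bookkeeping; the point requiring genuine care — and the step I expect to be the real obstacle — is the legitimacy of writing $X_{V\otimes W} = \Delta(X)$ and of commuting the $X$ factors past the $\Gamma$'s. This rests precisely on $J$ and $T_{w_0}$ being $\bc(q)$-linear systems of endomorphisms, so that they genuinely lie in the completion and act through $\Delta$, in sharp contrast to the bar-linear, basis-dependent $\Theta$ and $\Gamma$, for which $\Theta_{V\otimes W}\neq\Delta(\Theta)$ in general. One should also confirm that Lemma \ref{Gamma_Lemma} is invoked with exactly the tensor-product global basis $A$ used to define $\Theta_{(V\otimes W, A)}$, so that the two occurrences of $\Gamma_{(V\otimes W, A)}$ that must cancel are literally the same operator. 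Reducing to irreducible $V$ and $W$ at the outset, as in Lemma \ref{Gamma_Lemma}, keeps all of this transparent.
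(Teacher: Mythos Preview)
Your argument is correct and is essentially the paper's own proof run in the opposite direction: the paper starts from $R=(X^{-1}\otimes X^{-1})\Delta(X)$, substitutes $X=\Gamma^{-1}\Theta$ to obtain $(\Theta^{-1}\otimes\Theta^{-1})(\Gamma\otimes\Gamma)\Gamma_{(V\otimes W,A)}^{-1}\Theta_{(V\otimes W,A)}$, and then invokes Lemma~\ref{Gamma_Lemma} to remove the $\Gamma$ factors, whereas you start from the $\Theta$ expression and arrive at $(X^{-1}\otimes X^{-1})\Delta(X)$. The ingredients (Lemma~\ref{whatisgamma}\eqref{GT=JT}, Lemma~\ref{Gamma_Lemma}, Corollary~\ref{xR}) and their roles are identical; note also that no genuine ``commuting $X$ past $\Gamma$'' is needed, since $(X^{-1}\Gamma^{-1})\otimes(X^{-1}\Gamma^{-1})=(X^{-1}\otimes X^{-1})(\Gamma^{-1}\otimes\Gamma^{-1})$ is just functoriality of the tensor product.
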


\begin{proof}
By Corollary \ref{xR} and Lemma \ref{whatisgamma} part \eqref{GT=JT} 
\begin{equation}
\begin{aligned}
R & = 
((J T_{w_0})^{-1} \otimes (J T_{w_0})^{-1}) \Delta(J T_{w_0}) \\
&=
(\Theta_{(V,B)}^{-1} \otimes \Theta_{(W,C)}^{-1}) (\Gamma_{(V,B)} \otimes \Gamma_{(W,C)}) (\Gamma_{(V \otimes W, A)})^{-1} \Theta_{(V \otimes W, A)}.
\end{aligned}
\end{equation} 
By Lemma \ref{Gamma_Lemma}, the $ (\Gamma_{(V,B)} \otimes \Gamma_{(W,C)}) (\Gamma_{(V \otimes W, A)})^{-1} $ that appears acts as the identity.
\end{proof}

\begin{Comment} \label{match-intro} By Theorem \ref{main}, the composition 
\begin{equation} \label{the-thing}
\big(\Theta_{(V,B)}^{-1} \otimes \Theta_{(W,C)}^{-1}\big) \Theta_{(V \otimes W, A)}
\end{equation}
does not depend on the choices on global bases $B$ and $C$. Introducing the notation $\Delta(\Theta)$ to mean $\Theta_{(V \otimes W, A)}$ and dropping the subscripts, we can interpret $(\Theta^{-1} \otimes \Theta^{-1}) \Delta(\Theta)$ as \eqref{the-thing} calculated using any global bases $B$ and $C$. Then Theorem \ref{main} becomes \eqref{theta_int} from the introduction. We also note that $\Theta_{(V,B)}$ is easily seen to be an involution, so the inverses in \eqref{the-thing} are perhaps unnecessary. 
\end{Comment}

\section{Future directions} \label{questions} 

Although we have only proven Theorem \ref{main} when $\g$ is of finite type, much of the construction works in greater generality. We did not assume $\g$ was finite type in Section \ref{maketheta}, so the expression $\big(\Theta_{(V,B)}^{-1} \otimes \Theta_{(W,C)}^{-1}\big) \Theta_{(V \otimes W, A)}$ makes sense for any symmetrizable Kac-Moody algebra. Since $C_\Theta$ is a coalgebra-antiautomorphism, the methods from Section \ref{autint} imply that 
\begin{equation} \label{the-sigma}
\Flip \circ \big(\Theta_{(V,B)}^{-1} \otimes \Theta_{(W,C)}^{-1}\big) \Theta_{(V \otimes W, A)}
\end{equation}
is an isomorphism of representations. Furthermore, it is true in general that \eqref{the-sigma} does not depend on the choice of $B$ and $C$. To see why, it is sufficient to consider the case when $V=V_\lambda$ and $W=V_\mu$ are irreducible. Then the global bases $B_\lambda$ and $B_\mu$ are unique up multiplication by an overall scalar. It is straightforward to see that if $B_\lambda$ (or $B_\mu$) is scaled by a constant $z$, then $A$ is scaled by $z$ as well, and from there that both $\Theta_{(V_\lambda, B_\lambda)}$ and $\Theta_{(V_\lambda \otimes V_\mu, A)}$ are scaled by $z/\bar z$, where $\bar z$ is obtained from $z$ by inverting $q$. Thus the composition is unchanged. 

As in Comment \ref{match-intro}, we can now make sense of the expression $(\Theta^{-1} \otimes \Theta^{-1}) \Delta(\Theta)$ for all symmetrizable Kac-Moody algebras $\g$. The fact that \eqref{the-sigma} defines an isomorphism is one of the properties required of a universal $R$-matrix. However, we have not proven the crucial equalities \eqref{qt}. Thus we ask: 

\begin{Question}
Is $(\Theta^{-1} \otimes \Theta^{-1}) \Delta(\Theta)$ a universal $R$-matrix for $U_q(\g)$ if $\g$ is a general symmetrizable Kac-Moody algebra?  If yes, is it the standard $R$-matrix?
\end{Question}

\end{document}